\theoremstyle{plain}
\newtheorem{theorem}{Theorem}[section]
\newtheorem{lemma}[theorem]{Lemma}
\newtheorem*{theorem*}{Theorem}
\newtheorem*{claim*}{Claim}
\newtheorem*{lemma*}{Lemma}
\theoremstyle{definition}
\begin{document}
\baselineskip=15.5pt
\title[involutions fixing $F^n\cup F^4$]{Determination of bounds on the dimension of manifolds with involutions fixing $F^n\cup F^4$}
 
\author[A. Nath]{Arijit Nath} 
\address{Indian Institute of Science Education and Research, Berhampur\\
Transit campus, (Govt. ITI Building), Engg. School Junction, Berhampur, Odisha 760010, India}	
	
\email{arijit2357@gmail.com, arijitnath@iiserbpr.ac.in}

\author[A. Nath]{Avijit Nath} 
\address{Indian Institute of Science Education and Research, Berhampur\\ Transit campus, (Govt. ITI Building), Engg. School Junction, Berhampur, Odisha 760010, India}
\email{avijitnath.math@gmail.com, avijitnath@iiserbpr.ac.in}

\subjclass[2020]{Primary 57R85; Secondary 57R75.}
\keywords{Involution, fixed data, equivariant cobordism class, characteristic number, real projective bundle, Hopf line bundle, Wu formula, Steenrod operation, Stong--Pergher number.}
\thispagestyle{empty}
\date{}

\begin{abstract}
Let $M^m$ be an $m$-dimensional, smooth and closed manifold, equipped with a smooth involution $T\colon M^m \to M^m$ fixing submanifolds $F^n$ and $F^4$ of dimensions $n$ and $4$, respectively, where $4<n<m$ and $F^n\cup F^4$ does not bound. We determine the upper bound for $m$, for each $n$. The existence of these bounds is ensured by the famous Five Halves Theorem of J. Boardman, which establishes that,
under the above hypotheses, $m\leqslant\frac{5}{2}n$.

\end{abstract}

\dedicatory{}

\maketitle

\section{Introduction} \label{intro}
Suppose $M^m$ is a smooth and closed $m$-dimensional manifold and $T\colon M^m \to M^m$ is a smooth involution defined on $M^m$. The fixed point set of $T$, $F$, is a disjoint union of closed submanifolds of $M^m$, $F=\bigcup\limits_{j=0}^{n} F^j$ where $F^j$ denotes the union of those components of $F$ having dimension $j$. It is well-known, from equivariant
bordism theory, that if $(M^m, T)$ is non-bounding then $n$ cannot be too small with respect to $m$. This fact was evidenced from an old result of Conner and Floyd (Theorem 27.1 of \cite{cf}), which stated: for each natural number $n$, there exists a number $\phi(n)$ with the property that, if $m>\phi(n)$, then $(M^m,T$) bounds equivariantly. Later, this was explicitly
confirmed by the famous $5/2$-Theorem of Boardman \cite{boardman}: if $M^m$ is nonbounding, then $m\leqslant\frac{5}{2}n$. A strengthened version of this fact was obtained by Kosniowski and Stong \cite{ks}: if $(M^m,T)$ is a non-bounding involution, which is equivalent to the fact that the
normal bundle of $F$ in $M^m$ is not a boundary (see \cite{cf}), then $m\leqslant\frac{5}{2}n$. In particular, if $F$ is non-bounding (which means that at least one $F^j$ is nonbounding), then $m\leqslant\frac{5}{2}n$.
The case $F=F^n$ was settled by C. Kosniowski and R. E. Stong \cite{ks}.
When $F=F^n\cup \{point\}$, recently Stong and Pergher \cite{ps} proved the following result for each natural number $n$, write
$n = 2^{p}q$, where $p\geq 0$ and $q$ is odd, and set
\[
\mathcal{M}(n)=
\begin{cases}
(2^{p+1}-1)q+p+1 = 2n+p-q+1,  & \textrm{if $p\leqslant q$,}\\
(2^{p+1}-2^{p-q})q+2^{p-q}(q+1)=2n+2^{p-q}, & \textrm{if $p>q$}.\\
\end{cases}
\]
Then, if $(M^m,T)$ is an involution whose fixed set has the form $F=F^n\cup \{point\}$, $m\leqslant\mathcal{M}(n)$. Further, there are involutions with $m=\mathcal{M}(n)$ fixing a point and some $F^n$, thus showing that the bounds $\mathcal{M}(n)$ cannot be improved.\\
The next case is $F=F^n\cup F^j$ with $0<j<n$; in a series of papers, Pergher et al. (See \cite{fp-topology}, \cite{bp}.) dealt with the subcases $j\leqslant 3$ and $j=n-1$ (See \cite{fp-dedicata}.).\\
Let $M^m$ be a closed smooth manifold equipped with a smooth involution having fixed point set of the form $F^n\cup F^4$, where $F^n$ and $F^4$ are submanifolds with dimensions $n$ and $4$, respectively, and where $4<n<m$ and $F^n\cup F^4$ does not bound. In this work, we establish the upper bound for $m$, for each $n$. 
Now, we state the main result of our paper.
\begin{theorem}\label{main theorem}
Suppose that $(M^m,T)$ is an involution having fixed set $F$ which does not bound and has the form  $F^n\cup F^4$. If the normal bundle over the component $F^4$ does not bound, then $m\leqslant\mathcal{M}(n-4)+8$.
\end{theorem}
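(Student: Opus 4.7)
The plan is to reduce the statement to the Pergher--Stong bound $\mathcal{M}(n-4)$ for fixed data of the form $F^{n-4}\cup\{\text{point}\}$. The target inequality $m\leqslant\mathcal{M}(n-4)+8$ corresponds to an $8$-dimensional shift that the $F^4$ component should contribute over a single point: four dimensions for the top fixed component (going from $F^n$ to $F^{n-4}$) and four for the bottom one (going from $F^4$ to a point).

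Concretely, I would proceed in three steps. First, encode the fixed data as the disjoint pair of equivariant normal bundles $\nu^{m-n}\to F^n$ and $\eta^{m-4}\to F^4$; the non-bounding hypothesis on $(F^4,\eta)$ furnishes a nonzero Stiefel--Whitney characteristic number of the form $\langle w^{\alpha}(\eta)\,w^{\beta}(TF^4),[F^4]\rangle\not\equiv 0\pmod 2$ with $|\alpha|+|\beta|=4$, which will serve as the concrete obstruction transported through the reduction. Second, following the Pergher--Stong template, write down the Conner--Floyd-type characteristic-number relations required for $(F^n\cup F^4,\nu\sqcup\eta)$ to be realized as the fixed data of an involution on a closed manifold; these take the form of Wu and Steenrod identities that couple the characteristic numbers of $(F^n,\nu)$ to those of $(F^4,\eta)$. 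Third, combine the identities with the nonzero number from Step~$1$ to extract the desired bound, either by a direct Stong--Pergher-number argument, or by constructing an auxiliary non-bounding involution on a closed $(m-8)$-manifold whose fixed set has the shape $G^{n-4}\cup\{\text{point}\}$ (with $G^{n-4}$ obtained from $F^n$ and the isolated point arising from a collapse of $F^4$ via a Boardman-style projective construction on $\eta$) and then invoking the Pergher--Stong theorem.

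The main obstacle is Step~$2$: converting the non-bounding of $(F^4,\eta)$ into an effective rank constraint on $\eta$, which has rank $m-4$. The analysis naturally splits into cases according to which monomial $w^{\beta}(TF^4)$ of degree $|\beta|\leqslant 4$ appears in the obstruction, each case requiring its own Wu-formula bookkeeping of how the Stiefel--Whitney classes transform under the projectivization, together with Steenrod-operation identities that express a top-degree characteristic number of the reduced pair as a nonzero image of the Step-$1$ monomial. Assembling these case-by-case bounds into the uniform inequality $m\leqslant\mathcal{M}(n-4)+8$ will parallel and extend the arguments of Pergher--Stong for $F^n\cup\{\text{point}\}$ and of the works on $F^n\cup F^j$ for $j\leqslant 3$ cited above. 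A secondary point, should the constructive version of Step~$3$ be pursued, is to ensure that the new fixed set is \emph{exactly} $G^{n-4}\cup\{\text{point}\}$ and that no spurious components appear; this is controlled by a local model of $T$ on a tubular neighborhood of $F^4$ and by a genericity argument on the reduction.
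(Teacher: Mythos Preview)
Your proposal is a sketch rather than a proof, and while the broad direction of Step~2 (comparing characteristic numbers of the two pieces of fixed data via the Conner--Floyd cobordism of the projective bundles) is correct in spirit, the plan has two genuine gaps.

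First, the constructive alternative in Step~3 is not viable. There is no general construction that starts from an involution on $M^m$ fixing $F^n\cup F^4$ and produces an involution on a closed $(m-8)$-manifold fixing $G^{n-4}\cup\{\text{point}\}$; a ``Boardman-style projective construction on $\eta$'' does not collapse $F^4$ to a point in the required way, and no genericity argument on tubular neighbourhoods will manufacture such a reduction. The paper does not attempt anything of this kind.

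Second, and more importantly, the ``direct Stong--Pergher-number argument'' you allude to needs two specific ingredients that your outline does not supply. The paper's argument has an essential two-stage structure: (i) assuming $m>\mathcal{M}(n-4)+8$, one uses the Stong--Pergher class $X$ of dimension $\mathcal{M}(n-4)$ (built from the modified classes $W[r]=W(\mathbb{R}P(\eta))/(1+c)^{k-r}$) together with five carefully chosen $8$-dimensional classes $f_{\omega_i}$, one for each partition of $4$, to force five cohomological relations among $w_i(F^4)$ and $v_i=w_i(\mu)$; these relations pin the stable bordism class of $\mu\to F^4$ down to the single class $\beta$ represented by $3\xi\to\mathbb{R}P^4$; (ii) for this specific representative one then exhibits, by explicit binomial-coefficient calculations in $H^*(\mathbb{R}P(\mu),\mathbb{Z}_2)$, a characteristic number that vanishes on the $F^n$ side and is nonzero on the $F^4$ side. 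Your Step~1 picks \emph{one} nonzero characteristic number of $(F^4,\eta)$, but a single number cannot drive the argument: you need the full system of five $f_{\omega_i}$-equations to reduce to $\beta$, and without the $W[r]$-machinery there is no mechanism to make the relevant classes vanish on $\mathbb{R}P(\eta)$ in the first place. The Wu-formula and Steenrod bookkeeping you mention does appear, but only inside stage~(i), to simplify the five relations once they have been obtained.
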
 

\section{Preliminaries}

Let $\eta$ be a general $k$-dimensional vector bundle over a closed smooth $n$-dimensional manifold $N$. Write $W(\eta)=1+w_1(\eta)+w_2(\eta)+ \cdots +w_k(\eta)\in H^{*}(N,\mathbb{Z}_2)$ for the Stiefel–Whitney class of $\eta$, and 
$W(N)=1+w_1(N)+w_2(N)+ \cdots +w_n(N)$ for the Stiefel–Whitney class of the tangent bundle of $N$. Then the fiber bundle cobordism class of $\eta$ is determined by the set of Whitney numbers (or characteristic numbers ) of $\eta$; such modulo $2$ numbers are obtained by evaluating $n$-dimensional $\mathbb{Z}_2$-cohomology classes of the form
\[
w_{i_1}(N)w_{i_2}(N)\ldots w_{i_r}(N)w_{j_1}(\eta)w_{j_2}(\eta)\ldots w_{j_s}(\eta)\in H^{n}(N,\mathbb{Z}_2)
\]
(that is, with $i_1+i_2+\cdots i_r+j_1+j_2+\cdots j_s=n$) on the fundamental homology class $[N]\in H_n(N,\mathbb{Z}_2)$.
Furthermore, we need to know information about the structure of the cohomology ring and the Stiefel–Whitney class of $\mathbb{R}P(\eta)$. If
$\lambda\mapsto\mathbb{R}P(\eta)$ is the Hopf line bundle, set $w_1(\lambda)=c\in H^1(\mathbb{R}P(\eta), \mathbb{Z}_2)$. From \cite{bh}, one has
\begin{multline*}
W(\mathbb{R}P(\eta))=(1+w_1(N)+w_2(N)+\cdots+w_n(N)) [(1+c)^k\\
 \hspace{3cm}    +(1+c)^{k-1}w_1(\eta)+\cdots+(1+c)w_{k-1}(\eta)+w_k(\eta)],\hfill 
\end{multline*}
where here we are suppressing bundle maps. Furthermore, this gives the relation
\[c^k+ c^{k-1}w_1(\eta) +c^{k-2}w_2(\eta) +\cdots+ w_k(\eta)=0. \]
The structure of the $\mathbb{Z}_2$-cohomology ring of $\mathbb{R}P(\eta)$ is then determined by the above relation and from the Leray–Hirsch Theorem (See \cite{Borel1953}.), which yields
that $H^*(\mathbb{R}P(\eta), \mathbb{Z}_2)$ is a free graded $H^*(N, \mathbb{Z}_2)$-module with basis $1, c, c^2,\ldots, c^{k-1}$.

 Let $(\eta\mapsto F^n) \cup (\mu\mapsto F^4)$ be the fixed data of $(M,T)$, write $W(F^4)=1+w_1+w_2+w_3+w_4$ for the  Stiefel–Whitney class of $F^4$ and $W(\mu)=1+v_1+v_2+v_3+v_4$  for the  Stiefel–Whitney class of $\mu$.
\section{The bound $m\leqslant\mathcal{M}(n-4)+8$}
\noindent
This section will be devoted to the proof of the part ``$m\leqslant\mathcal{M}(n-4)+8$" of Theorem \ref{main theorem}.
\begin{lemma}\label{main lemma}
If $m>\mathcal{M}(n-4)+8$, then $v_1^4=w_1^4$, $v_2^2+w_2^2=v_1^2w_1^2$, $v_1v_3=v_1^2w_2+w_1^2v_2$, $v_1^2v_2+v_1v_3=v_1^3w_1+v_1w_1^3$ and $v_4+w_4=v_1^2w_2+v_1w_3$.
\end{lemma}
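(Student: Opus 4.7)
The plan is to apply the standard Conner--Floyd / Kosniowski--Stong projective-bundle technique, together with the Stong--Pergher combinatorial machinery. The equivariant bordism class of $(M^m,T)$ is determined by the bordism class of its fixed data, equivalently by the associated projective bundles $\mathbb{R}P(\eta) \to F^n$ and $\mathbb{R}P(\mu) \to F^4$ carrying their tautological Hopf line bundles. The Borel--Hirzebruch formula recalled in the preliminaries expresses $W(\mathbb{R}P(\eta))$ and $W(\mathbb{R}P(\mu))$ in terms of $c$, the tangent classes $w_i$, and the normal classes $v_j$ (and the analogous classes on $F^n$). For every Stiefel--Whitney monomial $\Omega$ of appropriate top degree, the sum of the characteristic numbers evaluated on the two projective bundles is a bordism invariant of $(M,T)$, and the non-bounding hypothesis on $(M,T)$ controls which of these sums can be nonzero.

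The first computational step is to expand each such characteristic number using the Borel--Hirzebruch formula together with the defining relations $c^{m-n} + c^{m-n-1}w_1(\eta) + \cdots + w_{m-n}(\eta) = 0$ on $\mathbb{R}P(\eta)$ and its analogue on $\mathbb{R}P(\mu)$. The hypothesis $m > \mathcal{M}(n-4)+8$ is then brought in as follows: for a carefully chosen family of monomials $\Omega$, the $\mathbb{R}P(\eta)$-contribution vanishes by the combinatorial argument of Stong--Pergher, suitably adapted. The shift from $\mathcal{M}(n)$ to $\mathcal{M}(n-4)$ reflects the fact that on the $F^4$ side the $w_i$ and $v_j$ can together contribute up to $8$ units of cohomological degree -- up to $4$ from the tangent of $F^4$ and up to $4$ from the $v_j$ -- so that once $m-8 > \mathcal{M}(n-4)$, the Stong--Pergher binomial-coefficient analysis, combined with Wu's formula $\mathrm{Sq}^i(w_j) = \sum_k \binom{j-i-1+k}{k} w_{i-k}\, w_{j+k}$, kills the $F^n$ side.

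With the $F^n$-contribution eliminated, each vanishing of the corresponding $\mathbb{R}P(\mu)$-side number reduces, via the Leray--Hirsch decomposition and the defining relation for $c$, to an identity in $H^4(F^4;\mathbb{Z}_2)$ among the $w_i$ and $v_j$. The five listed relations arise by selecting five specific monomials whose reductions, after manipulation by the Wu formula and the Steenrod operations, are exactly $v_1^4 + w_1^4$, $v_2^2 + w_2^2 + v_1^2 w_1^2$, $v_1 v_3 + v_1^2 w_2 + w_1^2 v_2$, $v_1^2 v_2 + v_1 v_3 + v_1^3 w_1 + v_1 w_1^3$, and $v_4 + w_4 + v_1^2 w_2 + v_1 w_3$. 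The principal difficulty will be in pinning down this exact family of monomials and verifying the binomial-coefficient vanishing pattern in the shifted regime, since the combinatorial analysis underlying $\mathcal{M}(\cdot)$ is substantially more delicate when the second fixed component has dimension $4$ rather than $0$: each $v_j$ or $w_i$ factor inserts extra terms that must be tracked all the way through the Stong--Pergher reduction before the claimed identities on $F^4$ fall out cleanly.
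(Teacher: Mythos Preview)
Your outline has the right overall shape---match characteristic numbers on the two projective bundles, kill the $F^n$ side using the hypothesis $m>\mathcal{M}(n-4)+8$, and read off identities in $H^4(F^4;\mathbb{Z}_2)$---but it stops short of a proof at exactly the point where the real content lies, and in one place it misidentifies the mechanism.

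The paper does \emph{not} kill the $F^n$ contribution by a binomial-coefficient vanishing pattern or by Wu's formula. Instead it builds two explicit families of classes and uses a pure dimension count. First, using the Stong--Pergher variants $W[r]=W(\mathbb{R}P(\eta))/(1+c)^{k-r}$, it writes down a specific class $X\in H^{\mathcal{M}(n-4)}(\mathbb{R}P(\eta);\mathbb{Z}_2)$ (a product of factors $W[r_i]_{2r_i}$ and, when $p<q+1$, a power of $W[2^p-1]_{2^{p+1}-1}$) with the property that \emph{every} term of $X$ carries a factor of cohomological degree at least $n-3$ coming from $H^*(F^n)$. Second, it introduces five $8$-dimensional classes $f_{\omega_i}(\lambda)$, one for each partition $\omega_i$ of $4$, defined via symmetric polynomials $\sum x_{j_1}(c+x_{j_1})\cdots$ in the split Stiefel--Whitney roots; each $f_{\omega_i}$ has every term carrying a factor of degree at least $4$ from $H^*(F^n)$. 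Consequently $f_{\omega_i}(\lambda)\cdot X$ has every term with an $F^n$-factor of degree $\ge n+1$, hence is identically zero. Multiplying by $c^{m-1-(\mathcal{M}(n-4)+8)}$ and evaluating gives zero characteristic numbers on the $F^n$ side with no Wu-formula input whatsoever.

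On the $F^4$ side the paper does not land directly on your five expressions. It first shows that the corresponding class $Y$ satisfies $Y\equiv d^{\mathcal{M}(n-4)}$ modulo the ideal generated by positive-degree classes from $F^4$ (here the binomial computations via Lucas' theorem are used), and that each $f_{\omega_i}(\nu)$ reduces to a polynomial in the Stiefel--Whitney classes $V_j$ of the \emph{Whitney sum} $\tau_{F^4}\oplus\mu$. The resulting system is $V_4=V_1V_3=V_2^2=V_1^2V_2=V_1^4=0$. Only at this final stage are the Wu class of $F^4$ and the Wu formula invoked, to translate these relations among the $V_j$ into the five listed identities among the $v_i$ and $w_i$. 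Your proposal conflates these two quite different uses of combinatorics and Steenrod operations, and---more importantly---does not supply the concrete classes $X$ and $f_{\omega_i}$ on which the whole argument rests; you yourself flag ``pinning down this exact family'' as the principal difficulty, and that difficulty is not addressed.
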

Note that the unique $\beta$ satisfying relations of Lemma \ref{main lemma} is the stable cobordism class corresponding to the bundle $3\xi\mapsto\mathbb{R}P^4$.
Thus this lemma will reduce our task to the following:
\begin{theorem}\label{main theorem 2}
Let $(M^m,T)$ is an involution having fixed set $F$ of the form  $F=F^n\cup F^4$. If the normal bundle $\mu\mapsto F^4$ represents $\beta$, then $m\leqslant\mathcal{M}(n-4)+8$.	
\end{theorem}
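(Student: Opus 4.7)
The plan is to reduce Theorem \ref{main theorem 2} to the Stong--Pergher bound \cite{ps}, which controls $m$ for involutions whose fixed set has the form $F^{n-4} \cup \{\mathrm{pt}\}$. The hypothesis that $\mu \to F^4$ represents the stable cobordism class $\beta = [3\xi \to \mathbb{R}P^4]$ rigidifies the $F^4$ part of the fixed data completely, up to bundle cobordism, and it is this rigidity that allows the $F^4$ component to be ``traded'' for a single fixed point while simultaneously shaving $4$ dimensions from $F^n$ and $8$ from the ambient manifold.

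Concretely, I would first invoke the Conner--Floyd correspondence \cite{cf} to replace, without changing the equivariant cobordism class of $(M, T)$, the normal bundle $\mu \to F^4$ by the explicit model $3\xi \oplus (m-7)\varepsilon \to \mathbb{R}P^4$, where $\varepsilon$ denotes a trivial line bundle. Next, I would build an auxiliary ``absorbing'' involution $(P^8, \tau)$ on an $8$-manifold whose fixed set is $\mathbb{R}P^4 \cup \{\mathrm{pt}\}$, with the $\mathbb{R}P^4$ component carrying normal bundle stably equal to $3\xi$; a natural candidate is the projective bundle $\mathbb{R}P(3\xi \oplus 2\varepsilon) \to \mathbb{R}P^4$ equipped with the fiberwise involution $[\mathbf{v}, s, t] \mapsto [\mathbf{v}, s, -t]$. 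Combining $(M, T)$ with this model via a projective-bundle construction that exploits the Leray--Hirsch decomposition of Section~2, one obtains a new involution $(N^{m-8}, S)$ whose fixed set has the form $F^{n-4} \cup \{\mathrm{pt}\}$, and whose non-bounding property is inherited from $(M, T)$. Applying the Stong--Pergher theorem to $(N, S)$ then yields $m - 8 \leqslant \mathcal{M}(n-4)$, which is exactly the claimed bound.

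The main technical obstacle lies in the construction of $(N, S)$, specifically in verifying that the characteristic numbers of the resulting fixed data do not all vanish, i.e., that the reduction preserves non-boundedness. This step requires a careful calculation inside $H^*(\mathbb{R}P(\eta);\mathbb{Z}_2)$ as recalled in Section~2, coupled with the Wu formula, to express the Stiefel--Whitney numbers of the new fixed data as polynomials in the numbers of $(\eta \to F^n)$ and the known, explicit classes of $(3\xi \to \mathbb{R}P^4)$. The specific relations $v_1^4 = w_1^4$, $v_2^2 + w_2^2 = v_1^2 w_1^2$, etc., from Lemma \ref{main lemma} are precisely what make these characteristic numbers track cleanly through the reduction, since they pin $\mu$ to the class $\beta$. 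Once this is in place, Stong--Pergher's bound closes the argument and gives $m \leqslant \mathcal{M}(n-4) + 8$.
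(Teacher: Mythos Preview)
Your proposal takes a genuinely different route from the paper, and the central step does not go through.

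The paper argues by contradiction via explicit characteristic-number computations: assuming $m>\mathcal{M}(n-4)+8$, it writes down concrete polynomials in the classes $W[r]_i$ and $c$ (namely $W[0]_1^{n+3}c^{m-4-n}$ for $n$ odd, and $X\cdot W[0]_2^4\cdot c^{m-1-(\mathcal{M}(n-4)+8)}$ for $n$ even) which evaluate to zero on $\mathbb{R}P(\eta)$ but, after a careful binomial-coefficient analysis using Lucas' theorem, to a nonzero number on $\mathbb{R}P(\mu)$. This contradicts the cobordism $\lambda\sim\nu$ in $\mathcal{N}_{m-1}(BO(1))$. No auxiliary involution is constructed and no dimension reduction occurs.

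Your plan instead attempts to manufacture an involution $(N^{m-8},S)$ with fixed set $F^{n-4}\cup\{\text{pt}\}$ and then quote Stong--Pergher. Two concrete problems:

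\textbf{(1) The model $(P^8,\tau)$ is wrong.} For $\mathbb{R}P(3\xi\oplus 2\varepsilon)$ with $[\mathbf{v},s,t]\mapsto[\mathbf{v},s,-t]$, the fixed set is $\{t=0\}\cup\{\mathbf{v}=0,\ s=0\}=\mathbb{R}P(3\xi\oplus\varepsilon)\cup\mathbb{R}P^4$, a $7$-manifold union a $4$-manifold, not $\mathbb{R}P^4\cup\{\text{pt}\}$.

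\textbf{(2) The reduction step is undefined.} Even granting a correct $(P^8,\tau)$, you give no construction that takes $(M^m,T)$ and $(P^8,\tau)$ and produces an involution on an $(m-8)$-manifold. Products of involutions \emph{raise} dimension; projective-bundle and Leray--Hirsch constructions do likewise. There is no standard ``cancellation'' operation in equivariant cobordism that outputs a lower-dimensional representative with prescribed fixed-set shape. You flag this as ``the main technical obstacle'' but do not resolve it; without an actual $(N^{m-8},S)$, there is nothing to which the Stong--Pergher bound applies, and the argument has no content.

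A minor logical point: Lemma~\ref{main lemma} is proved \emph{under} the hypothesis $m>\mathcal{M}(n-4)+8$ in order to force $\mu$ into the class $\beta$. In Theorem~\ref{main theorem 2}, $\mu\in\beta$ is already assumed; the relations of the lemma hold for $3\xi\to\mathbb{R}P^4$ tautologically and do not help with your reduction.
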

The following basic fact from \cite{cf} will be needed for the proof of Lemma \ref{main lemma}: Let $\mathbb{R}P(\eta)$ and $\mathbb{R}P(\mu)$ denote the projective space bundles, and $\lambda\mapsto\mathbb{R}P(\eta)$ and  $\nu\mapsto\mathbb{R}P(\mu)$ denote the line bundles of the double covers $S(\eta)\mapsto\mathbb{R}P(\eta)$ and $S(\mu)\mapsto\mathbb{R}P(\mu)$ where $S(\;)$ meaning sphere bundles, then $\lambda\mapsto\mathbb{R}P(\eta)$ and  $\nu\mapsto\mathbb{R}P(\mu)$ are cobordant as elements of the bordism group $\mathcal{N}_{m-1}(BO(1))$. Then any class of dimension $m-1$, given by a product of the classes $w_i(\mathbb{R}P(\eta))$ and $w_1(\lambda)$, evaluated on the fundamental homology class $[\mathbb{R}P(\eta)]$, gives the same characteristic number as the one obtained by the corresponding product of the classes  $w_i(\mathbb{R}P(\mu))$ and $w_1(\nu)$, evaluated on $[\mathbb{R}P(\mu)]$. Set $k=m-n$, and write
\begin{eqnarray*}
W(F^n)=1+\theta_1+\cdots+\theta_n,\\
W(\eta)=1+u_1+\cdots+u_k \quad and \\
W(\lambda)=1+c
\end{eqnarray*}
for the Stiefel–Whitney classes of $F^n$, $\eta$ and $\lambda$, respectively.
It is well-known from \cite{bh} that 
\[W(\mathbb{R}P(\eta))=(1+\theta_1+\theta_2+\cdots+\theta_n)\{(1+c)^k+(1+c)^{k-1}u_1+\cdots+(1+c)u_{k-1}+u_k \}\] where here we are suppressing bundle maps.
First, for any integer $r$, Stong and Pergher \cite{ps} introduced the following variant of $W(\mathbb{R}P(\eta))$:
\[W[r]=\frac{W(\mathbb{R}P(\eta))}{(1+c)^{k-r}},\]
noting that each class $W[r]_j$
is still a polynomial in the classes $w_i(\mathbb{R}P(\eta))$ and $c$.
Further, these classes satisfy the following special properties
\[
W[r]_{2r}=\theta_{r}c_r+\text{terms with smaller $c$ powers,}
\]
\[
W[r]_{2r+1}=(\theta_{r+1}+u_{r+1})c_r+\text{terms with smaller $c$ powers.}
\]

For $n\geqslant 5$, write $n-4=2^{p}q$, where $p\geqslant 0$ and $q$ is odd, and suppose first that $p<q+1$. Consider the list of integers $r_1, r_2,\ldots, r_p$, where $r_i=2^p-2^{p-i}$, and take the class 
\[
X=W[2^p-1]_{2^{p+1}-1}^{q+1-p}\cdot W[r_1]_{2r_1}\cdot W[r_2]_{2r_2}\cdots W[r_p]_{2r_p}
\]

The dimension of $X$ is
\[
 (q+1-p)(2^{p+1}-1)+\mathlarger{\sum}_{i=1}^{p}(2^p-2^{p-i})=(2^{p+1}-1)q+p+1=\mathcal{M}(n-4)
\]
From the properties above listed, one has

\begin{multline*}
X=(\theta_{2^p}+u_{2^p})c^{2^p-1}+\textrm{terms with smaller $c$ powers})^{q+1-p}(\theta_{r_1}c^{r_1}+\textrm{terms with smaller $c$ powers})\cdots\\ 
 \cdot(\theta_{r_p}c^{r_p}+\textrm{terms with smaller $c$ powers})\\
=((\theta_{2^p}+u_{2^p})^{q+1-p}\cdot\theta_{r_1}\cdot\theta_{r_2}\cdots\theta_{r_p})c^{(q+1-p)(2^p-1)+\sum_{i=1}^{p}r_i}+\textrm{terms with smaller $c$ powers}
\end{multline*}
Note that $(q+1-p)2^p+\sum_{i=1}^{p}r_i=(q+1-p)2^p+p2^p-2^p+1=2^{p}q+1=n-3$\\
Now if $p\geqslant q+1$, take
\[
X=W[r_1]_{2r_1}\cdot W[r_2]_{2r_2}\cdots W[r_{q+1}]_{2r_{q+1}}
\]
where again $r_i=2^p-2^{p-i}$ for $1\leqslant i\leqslant q+1$.\\
The dimension of $X$ is\\
$2\mathlarger{\sum}_{i=1}^{q+1}(2^p-2^{p-i})=(q+1)2^p-2^p+2^{p-q-1}=2^{p}q+2^{p-q-1}=n-4+2^{p-q-1}\geqslant n-3$.\\
Thus for every $n\geqslant 5$, $X$ is a class of dimension $\mathcal{M}(n-4)$ which has the form
\[X=A_t\cdot c^{\mathcal{M}(n-4)-t}+ \text{terms with smaller $c$ powers},
\]
where $A_t$ is a cohomology class of dimension $t\geqslant n-3$ coming from the cohomology of $F^n$.\\
Our next crucial step to introduce the five special $8$-dimensional
cohomology classes $f_{\omega_i}(\lambda), i=1,2,3,4,5$, where $\lambda$ is a line bundle over a smooth closed $s$-dimensional manifold $B^s$. Using the splitting, we write 
\[
W(B^s)=(1+x_1)\cdot(1+x_2)\cdots(1+x_s) 
\]
and $W(\lambda)=1+c$. Consider the following symmetric polynomials in the variables $x_1,x_2,\ldots, x_s,c$, of degree $8$, and related to the partitions of $4$, $\omega_1=(1,1,1,1)$, $\omega_2=(2,1,1)$, $\omega_3=(2,2)$,
$\omega_4=(3,1)$, $\omega_5=(4)$:
\begin{gather*}
\begin{multlined}
f_{\omega_1}=\sum_{i<j<k<l}x_i(c+x_i)x_j(c+x_j)x_k(c+x_k)x_l(c+x_l),\\
f_{\omega_2}=\sum_{\stackrel{i<j}{i,j\neq k}
}x_i(c+x_i)x_j(c+x_j)x_k^2(c+x_k)^2,\hfill\\
f_{\omega_3}=\sum_{i<j}x_i^2(c+x_i)^2 x_j^2(c+x_j)^2,\hfill\\
f_{\omega_4}=\sum_{i\neq j}x_i(c+x_i) x_j^3(c+x_j)^3 \quad \text{and}\hfill\\
f_{\omega_5}=\sum_{i}x_i^4(c+x_i)^4.\hfill\\
\end{multlined}
\end{gather*}
Then $f_{\omega_1}$, $f_{\omega_2}$, $f_{\omega_3}$, $f_{\omega_4}$ and $f_{\omega_5}$ determine polynomials of dimension $8$ in the classes $w_i(B^s)$ and $w_1(\lambda)=c$. Specializing for $\lambda\mapsto\mathbb{R}P(\eta)$, we write
\begin{align*}
W(F^n)=(1+x_1)\cdot(1+x_2)\cdots(1+x_n)\quad \text{and}\\
W(\eta)=(1+y_1)\cdot(1+y_2)\cdots(1+y_k).\hfill\\
\end{align*}
Then \[
W(\mathbb{R}P(\eta))=(1+x_1)(1+x_2)\cdots(1+x_n)(1+c+y_1)(1+c+y_2)\cdots(1+c+y_k).
\]
It follows that 
\begin{multline*}
f_{\omega_1}(\lambda\mapsto\mathbb{R}P(\eta))=\Big(\sum_{i<j<k<l}x_i x_j x_k x_l+\sum_{i<j<k<l}y_i y_j y_k y_l+ \sum_{\stackrel{i,j,k,l}{i<j, k<l}}x_i x_j y_k y_l\hfill\\ 
+\sum_{\stackrel{i,j,k,l}{j<k<l}}x_i y_j y_k y_l
+\sum_{\stackrel{i,j,k,l}{i<j<k}}x_i x_j x_k y_l\Big)c^4+ \text{terms with smaller $c$ powers,} 
\end{multline*}

\begin{multline*}
f_{\omega_2}(\lambda\mapsto\mathbb{R}P(\eta))=\Big(\sum_{\stackrel{i,j,k}{i<j;\;i,j\neq k}}x_i x_j x_k^2 + \sum_{\stackrel{i,j,k}{i<j;\;i,j\neq k}}y_i y_j y_k^2+\sum_{\stackrel{j<k}{i\neq j,k}}x_i^2 y_j y_k + \sum_{\stackrel{i<j}{i\neq j,k}}x_i x_j y_k^2 \\ 
+\sum_{\stackrel{i,j,k}{j\neq k}}x_i y_j y_k^2+\sum_{\stackrel{i,j,k}{i\neq j}}x_i^2 x_j y_k \Big)c^4+ \text{terms with smaller $c$ powers,}
\end{multline*}
\begin{multline*}
	f_{\omega_3}(\lambda\mapsto\mathbb{R}P(\eta))=\Big(\sum_{i<j}x_i^2x_j^2 +\sum_{k<l}y_k^2y_l^2+\sum_{i<k}x_i^2y_k^2\Big)c^4+ \text{terms with smaller $c$ powers,}  
\end{multline*}
\begin{multline*}
f_{\omega_4}(\lambda\mapsto\mathbb{R}P(\eta))=\Big(\sum_{i\neq j}x_ix_j^3 + \sum_{i\neq j}y_iy_j^3+\sum_{i,j}x_i^3y_j + \sum_{i,j}x_iy_j^3\Big)c^4+ \text{terms with smaller $c$ powers,}   
\end{multline*}
\begin{multline*}
f_{\omega_5}(\lambda\mapsto\mathbb{R}P(\eta))=\Big(\sum_{i}x_i^4+\sum_{j}y_j^4\Big)c^4 +\text{terms with smaller $c$ powers.} \hfill
\end{multline*}
Therefore, every term of $f_{\omega_1}$, $f_{\omega_2}$, $f_{\omega_3}$, $f_{\omega_4}$ and $f_{\omega_5}$ has a factor of dimension at least $4$ from the cohomology of $F^n$. On the other hand, each term of our previous class $X$ has a factor of dimension at least $n-3$ from the cohomology of $F^n$,
which means that, for $i=1,2,3,4,5$, $f_{\omega_i}\cdot X$ is a class in $H^{\mathcal{M}(n-4)+8}(\mathbb{R}P(\eta), \mathbb{Z}_2)$ with each one of its terms having a factor of dimension at least $n+1$ from $F^n$. Thus $f_{\omega_i}(\lambda)\cdot X=0$. Since $m>\mathcal{M}(n-4)+8$, we can form the class
$f_{\omega_i}(\lambda)\cdot X\cdot c^{m-1-(\mathcal{M}(n-4)+8)}$, which yields the zero characteristic number, $f_{\omega_i}(\lambda)\cdot X\cdot c^{m-1-(\mathcal{M}(n-4)+8)}[\mathbb{R}P(\eta)]$.\\
Hence, the left side of our system of equations is zero, and thus the next task is to analyze the right side of it. Next we analyze the class associated to $\nu\mapsto\mathbb{R}P(\mu)$ which corresponds to $f_{\omega_i}(\lambda)\cdot X\cdot c^{m-1-(\mathcal{M}(n-4)+8)}$.
Setting $W(\nu)=1+d$, this class is\\
$f_{\omega_i}(\nu\mapsto\mathbb{R}P(\mu))\cdot Y\cdot d^{m-1-(\mathcal{M}(n-4)+8)}$, where $Y$ is obtained from $X$ by replacing each $W[r]_i$ by $W[n+r-4]_i$.
We write $W(F^4)=(1+x_1)(1+x_2)(1+x_3)(1+x_4)$ and\\ $W(\mu)=(1+y_1)(1+y_2)(1+y_3)(1+y_4)$ and denote by $\sigma_i$
the $i$-th elementary symmetric polynomial in the variables $x_1, x_2, x_3, x_4, y_1, y_2, y_3\;\text{and}\; y_4$; that is
\[(1+x_1)(1+x_2)(1+x_3)(1+x_4)(1+y_1)(1+y_2)(1+y_3)(1+y_4)=1+\sigma_1+\sigma_2+\sigma_3+\sigma_4+\sigma_5+\sigma_6+\sigma_7+\sigma_8. \]
This is the factored form of the Whitney sum $\tau\oplus\mu$, where $\tau$ is the tangent bundle over $F^4$. The Stiefel–Whitney class of $\mathbb{R}P(\mu)$
\begin{multline*}
W(\mathbb{R}P(\mu))=(1+w_1+w_2+w_3+w_4)\{(1+d)^{n+k-4}+(1+d)^{n+k-5}v_1
+(1+d)^{n+k-6}v_2\\
\hspace{3cm}+(1+d)^{n+k-7}v_3+(1+d)^{n+k-8}v_4\}\hfill
\end{multline*}
Rewriting,
\begin{multline*}
W(\mathbb{R}P(\mu))=(1+d)^{n+k-8}\{(1+w_1+w_2+w_3+w_4)\{(1+d)^4+(1+d)^3v_1
+(1+d)^2v_2\hfill\\
\hspace*{2cm}+(1+d)v_3+v_4\}\}\hfill\\
\hspace*{2cm}=(1+d)^{n-k+8}\cdot(1+x_1)\cdot(1+x_2)\cdot(1+x_3)\cdot(1+x_4)\cdot(1+d+y_1)\cdot(1+d+y_2)\\
\hspace*{2cm}\cdot(1+d+y_3)\cdot(1+d+y_4).\hfill
\end{multline*}
Since $d+d=0$, note that the part $(1+d)^{n-k+8}$ does not contribute to $f_{\omega_i}(\nu)$. Then, by a straightforward calculation, we obtain 
\begin{gather*}
\begin{multlined}
f_{\omega_1}(\nu)=\sigma_4d^4+\text{terms with smaller $d$ powers},\hfill\\
f_{\omega_2}(\nu)=(\sigma_1\sigma_3+\sigma_4)d^4+\text{terms with smaller $d$ powers},\hfill\\
f_{\omega_3}(\nu)=(\sigma_2^2+\sigma_1\sigma_3+\sigma_4)d^4+\text{terms with smaller $d$ powers},\hfill\\
f_{\omega_4}(\nu)=(\sigma_1^2\sigma_2+\sigma_2^2+\sigma_1\sigma_3+\sigma_4)d^4+\text{terms with smaller $d$ powers},\hfill\\
f_{\omega_5}(\nu)=(\sigma_1^4+\sigma_1^2\sigma_2+\sigma_2^2+\sigma_1\sigma_3+\sigma_4)d^4+\text{terms with smaller $d$ powers}.\hfill\\
\end{multlined}
\end{gather*}
Set $W(\tau\oplus\mu)=1+V_1+V_2+V_3+V_4$ and note that if a term (with dimension $8$) has a power of $d$ less than $4$, it necessarily has a factor of dimension greater than $4$ from the cohomology of $F^4$. Hence, we obtain
\begin{gather*}
\begin{multlined}
f_{\omega_1}(\nu)=V_4\hfill\\
f_{\omega_2}(\nu)=V_1V_3+V_4,\hfill\\
f_{\omega_3}(\nu)=V_2^2+V_1V_3+V_4,\hfill\\
f_{\omega_4}(\nu)=V_1^2V_2+V_2^2+V_1V_3+V_4,\hfill\\
f_{\omega_5}(\nu)=V_1^4+V_1^2V_2+V_2^2+V_1V_3+V_4.\hfill\\
\end{multlined}
\end{gather*}
Let $\mathcal{I}$ be the ideal of $H^*(\mathbb{R}P(\mu), \mathbb{Z}_2)$ generated by the classes coming from $F^4$ and with positive dimension, we obtain by dimensional reasons that $f_{\omega_i}(\nu)\cdot A=0$ for each $A\in\mathcal{I}$. Thus, in the computation of $Y$, we need to consider only that 
\[
W(\mathbb{R}P(\mu))\equiv(1+d)^{n+k-4}~\text{mod $\mathcal{I}$}
\]
and, for each integer $l$,
\[
W[l]\equiv (1+d)^l~\text{mod $\mathcal{I}$.}
\]
For $r_i=2^p-2^{p-i}$, $i=1,\ldots, p$, set $l_i=n+r_i-4=2^{p}q+2^p-2^i$. Then
\[
W[l_i]_{2r_i}\equiv\binom{2^{p}q+2^p-2^{p-i}}{2^{p+1}-2^{p-i+1}}d^{2r_i}~\text{mod $\mathcal{I}$.}
\]
Also if $r=2^p-1$, $l=n+r-4=2^pq+2^p-1$ and 
\[
W[l]_{2r+1}\equiv\binom{2^{p}q+2^p-1}{2^{p+1}-1}d^{2r+1} ~\text{mod $\mathcal{I}.$}
\]
The lesser term of the $2$-adic expansion of $2^pq+2^p$ is $2^{p+1}$. By Lucas' theorem, we conclude that the above binomial coefficients are nonzero modulo $2$. It follows that all classes $W[r]_i$ occurring in $Y$ satisfy $W[r]_i\equiv d^i$ \text{mod $\mathcal{I}$}, which implies that $Y\equiv d^{\mathcal{M}(n-4)}$ \text{mod $\mathcal{I}$}. Thus, if $b\in H^4(\mathbb{R}P(\mu), \mathbb{Z}_2)$ is a cohomology class coming from $H^4(F^4, \mathbb{Z}_2)$, since $H^*(\mathbb{R}P(\mu), \mathbb{Z}_2)$ is the free $H^*(F^4, \mathbb{Z}_2)$-module on $1,d,d^2,\ldots, d^{n+k-5}$, we have that 
\[
b\cdot d^4\cdot Y\cdot d^{m-1-(\mathcal{M}(n-4)+8)}[\mathbb{R}P(\mu)]=b\cdot d^{m-5}[\mathbb{R}P(\mu)]=b[F^4],
\]
and $b[F^4]=0$ if and only if $b=0$.\\
Thefrefore, our system of equations becomes the cohomological
system of equations
\begin{equation*}
\begin{cases} 
0=V_4\\
0=V_1V_3+V_4\\
0=V_2^2+V_1V_3+V_4\\
0=V_1^2V_2+V_2^2+V_1V_3+V_4\\
0=V_1^4+V_1^2V_2+V_2^2+V_1V_3+V_4.
\end{cases}
\end{equation*}
Thus we conclude that $V_4=0$, $V_1V_3=0$, $V_2^2=0$, $V_1^2V_2=0$ and $V_1^4=0$.
Since $V_1=v_1+w_1$, we get $v_1^4=w_1^4$, the first relation of the lemma. Now we have $0=V_2^2=(v_2+w_2+v_1w_1)^2=v_2^2+w_2^2+v_1^2w_1^2$, the second relation of the lemma. Next $0=V_1V_3=(v_1+w_1)(v_3+w_3+v_1w_2+v_2w_1)=v_1v_3+v_1w_3+v_1^2w_2+v_1v_2w_1+w_1v_3+w_1w_3+v_1w_1w_2+v_2w_1^2$.
Now let $U=1+u_1+u_2$ be the Wu class of $F^4$. Then \[W(F^4)=1+w_1+w_2+w_3+w_4=Sq(U)=1+u_1+u_1^2+u_2+Sq^1(u_2)+u_2^2, \] that is,
$w_1=u_1$, $w_2=u_1^2+u_2$, $w_3=Sq^1(u_2)$, $w_4=u_2^2$, which gives $u_2=w_1^2+w_2$. Since $Sq^1(w_1^2)=0$, by the Wu formula, $w_3=Sq^1(u_2)=Sq^1(w_1^2+w_2)=Sq^1(w_2)=w_1w_2$. 
If $x\in H^{4-k}(F^4, \mathbb{Z}_2)$, it is known that $Sq^k(x)=u_k x$.
Then we obtain $w_2^2=Sq^2(w_2)=u_2w_2=(w_1^2+w_2)w_2$ which implies $w_1^2w_2=0$. Also, by the Wu formula, we have  $w_1v_3=u_1v_3=Sq^1(v_3)=v_1v_3$ and $v_1v_2w_1=Sq^1(v_1v_2)=v_1Sq^1(v_2)+Sq^1(v_1)v_2=v_1(v_1v_2+v_3)+v_1^2v_2=v_1v_3$.
Then, from above relations, $V_1V_3=0$ reduces to $0=v_1^2w_2+v_1v_3+w_1^2v_2$. So, we have $v_1v_3=v_1^2w_2+w_1^2v_2$, the third relation of the lemma. Next $0=V_1^2V_2=(v_1^2+w_1^2)(v_2+w_2+v_1w_1)=v_1^2v_2+v_1^2w_2+v_1^3w_1+w_1^2v_2+w_1^2w_2+v_1w_1^3$. Since $v_1^2w_2+v_1v_3+w_1^2v_2=0$, we have the following reduction $v_1^2v_2+v_1v_3=v_1^3w_1+v_1w_1^3$, the fourth relation of the lemma.
Finally, $0=V_4=v_4+w_4+v_1w_3+v_2w_2+v_3w_1=0$. We know 
$v_3w_1=Sq^1(v_3)=v_1v_3$ and form the third relation of the lemma, $v_1^2w_2=w_1^2v_2+v_1v_3$. Now
$v_2^2=Sq^2(v_2)=u_2v_2=(w_1^2+w_2)v_2=w_1^2v_2+v_2w_2$ which gives $v_2w_2=v_2^2+w_1^2v_2$.
Hence, from above relations, we have the following reduction to $V_4=0$:
$v_4+w_4+v_1^2w_2+v_1w_3=0$. So, we obtain $v_4+w_4=v_1^2w_2+v_1w_3$, the fifth relation of the lemma. Hence Lemma \ref{main lemma} is proved.

Now we prove Theorem \ref{main theorem 2}. Let $\mathbb{R}P^n$ be the $n$-dimensional real projective space, and $\xi\mapsto\mathbb{R}P^n$ be the canonical line bundle 
and $j\epsilon_{\mathbb{R}}\mapsto X$ denote the $j$-dimensional trivial vector bundle over $X$. It is not hard to see that $\mu\mapsto F^4=3\xi\oplus (m-7)\epsilon_{\mathbb{R}}\mapsto\mathbb{R}P^4$ is a representative for $\beta$ and we want to show that $m\leqslant\mathcal{M}(n-4)+8$. We repeat the notations $\nu\mapsto\mathbb{R}P(\mu)$ and $W(\nu)=1+d$ for the standard line bundle over $\mathbb{R}P(\mu)$ and its characteristic class. Our strategy will consist in showing that, if $m>\mathcal{M}(n-4)+8$, then it is possible to find
polynomials in the characteristic classes so that the corresponding characteristic numbers are zero on $F^n$ and nonzero on $F^4$, thus giving the contradiction.
The following will be a subtle calculation based on the structure of the cohomology ring of $\mathbb{R}P(\mu)$. Let $\alpha\in H^1(\mathbb{R}P^4, \mathbb{Z}_2)$ be the generator. Since $H^*(\mathbb{R}P(\mu), \mathbb{Z}_2)$ is the free $H^*(F^4, \mathbb{Z}_2)$-module on $1,d,d^2,\ldots, d^{m-5}$, subject to the relation
$d^{m-4}=d^{m-5}\alpha+d^{m-6}\alpha^2+d^{m-7}\alpha^3$. From this relation, we obtain 
\begin{equation*}
\begin{aligned}
d^{m-1} &=d^{m-2}\alpha+d^{m-3}\alpha^2+d^{m-4}\alpha^3,\\ 
d^{m-2} &=d^{m-3}\alpha+d^{m-4}\alpha^2+d^{m-5}\alpha^3 \quad \text{and}\\ 
d^{m-3} & =d^{m-4}\alpha+d^{m-5}\alpha^2+d^{m-6}\alpha^3.
\end{aligned}
\end{equation*}
Therefore, we obtain 
$d^{m-2}\alpha=d^{m-3}\alpha^2+d^{m-4}\alpha^3+d^{m-5}\alpha^4$ and  $d^{m-4}\alpha^3=d^{m-5}\alpha^4$. Combining above relations, we obtain 
$d^{m-2}\alpha=d^{m-3}\alpha^2$. This implies $d^{m-1}=d^{m-4}\alpha^3$.
Multiplying by $\alpha^2$ to the relation $d^{m-3}=d^{m-4}\alpha+d^{m-5}\alpha^2+d^{m-6}\alpha^3$, we obtain $d^{m-3}\alpha^2=d^{m-4}\alpha^3+d^{m-5}\alpha^4$. 
Now $d^{m-4}\alpha^3=d^{m-5}\alpha^4$ yields $d^{m-3}\alpha^2=0$.
Therefore, we have $d^{m-2}\alpha=d^{m-3}\alpha^2=0$.
Combining these relations, we obtain $d^{m-1}=d^{m-4}\alpha^3=d^{m-5}\alpha^4$,
which is the (top-dimensional) generator of $H^{m-1}(\mathbb{R}P(\mu), \mathbb{Z}_2)$.\\
First consider $n$ odd. In this case, we will obtain a stronger result, noting that
$\mathcal{M}(n-4)+8=n+5$.


\begin{lemma}
If $(M^m, T)$ is an involution fixing  $F=F^n\cup F^4$, where $n$ is odd and 
$\mu\mapsto F^4=3\xi\oplus (m-7)\epsilon_{\mathbb{R}}\mapsto\mathbb{R}P^4$, then $m\leqslant n+3$ (hence $m=n+3$).
\end{lemma}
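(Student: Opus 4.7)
The plan is to argue by contradiction: assume $m \geqslant n+4$ and produce a polynomial $\Phi$ in $w_i(\textrm{base})$ and the universal line class $c = w_1(\lambda) = w_1(\nu)$, of dimension $m-1$, whose characteristic number vanishes on $\mathbb{R}P(\eta)$ (for cohomological reasons on $F^n$) yet is nonzero on $\mathbb{R}P(\mu)$ (by direct computation exploiting $F^4 = \mathbb{R}P^4$ and $\mu = 3\xi \oplus (m-7)\epsilon_{\mathbb{R}}$). The Conner--Floyd identity $\lambda \sim \nu \in \mathcal{N}_{m-1}(BO(1))$ then forces equality of these two numbers, a contradiction. I split the analysis by the parity of $m$.

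When $m$ is odd (so $m - n$ is even and $m-4$ is odd; this covers $m = n+4$ as well as $m = n+6, n+8, \ldots$), take $\Phi = w_1(\textrm{base})^{m-1}$. On the $\eta$-side, $w_1(\mathbb{R}P(\eta)) = \theta_1 + u_1 + (m-n)c = \theta_1 + u_1$ is pulled back from $H^1(F^n)$, so $\Phi$ is pulled back from $H^{m-1}(F^n) = 0$ (since $m-1 > n$); hence $\Phi[\mathbb{R}P(\eta)] = 0$. On the $\mu$-side $w_1(\mathbb{R}P(\mu)) = (m-4)d = d$, so $\Phi = d^{m-1}$, and the identity $d^{m-1} = d^{m-5}\alpha^4$ combined with $\alpha^4[\mathbb{R}P^4] = 1$ gives $\Phi[\mathbb{R}P(\mu)] = 1$. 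This settles the odd-$m$ case.

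When $m$ is even (necessarily $m \geqslant n+5$), the previous $\Phi$ is useless because $w_1(\mathbb{R}P(\mu)) = (m-4)d = 0$. I would pass to higher Stiefel--Whitney classes. A direct expansion of $W(\mathbb{R}P(\mu)) = (1+\alpha)^5(1+d+\alpha)^3(1+d)^{m-7}$ gives
\[
w_2(\mathbb{R}P(\mu)) = \binom{m-4}{2}d^2 + d\alpha, \qquad w_3(\mathbb{R}P(\mu)) = d\alpha^2 + d^2\alpha + \binom{m-4}{3}d^3,
\]
and, together with $d^{m-1} = d^{m-4}\alpha^3 = d^{m-5}\alpha^4$ and $d^{m-2}\alpha = d^{m-3}\alpha^2 = 0$ proved above, the candidate $\Phi = w_2 w_3 c^{m-6}$ of dimension $m-1$ evaluates on the $\mu$-side to $1 + \binom{m-4}{2}\binom{m-4}{3} = 1$; here $\binom{m-4}{3}$ is even because $m-4$ is even (by Lucas' theorem applied at the lowest binary digit).

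The main obstacle is to verify that this same $\Phi = w_2 w_3 c^{m-6}$ pairs to zero on the $\eta$-side for every admissible $(F^n, \eta)$. My approach is to expand $w_2(\mathbb{R}P(\eta))$ and $w_3(\mathbb{R}P(\eta))$ through $W(\mathbb{R}P(\eta)) = W(F^n)\prod_j(1+c+y_j)$, push down the resulting polynomial in $c$ via the projection formula using $\pi_*(c^j) = \overline{u}_{j-k+1}$ (the dual Stiefel--Whitney classes of $\eta$), and then reduce the resulting combination of Stiefel--Whitney numbers of $(F^n, \eta)$ through the Wu formula of $F^n$. The odd-dimensionality of $F^n$ is essential: it yields $\theta_n[F^n] = 0$, and the top-degree identity $Sq^{k}(x)[F^n] = U_k \cdot x[F^n]$ (with $U$ the Wu class of $F^n$) should enable systematic cancellation of the residual terms. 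Should $\Phi = w_2 w_3 c^{m-6}$ fail to vanish universally on the $\eta$-side, one iterates through refinements (for instance, a linear combination also involving $w_2^a w_3^b c^{\ast}$) until a universally vanishing polynomial is identified. This Wu-formula reduction is the technical core of the lemma.
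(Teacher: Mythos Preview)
Your odd-$m$ case is correct and, since $k=m-n$ is even there, your $w_1(\mathbb{R}P(\eta))=\theta_1+u_1$ coincides with the paper's class $W[0]_1$; so that half of the argument is essentially the paper's.

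The even-$m$ case, however, is not proved. You correctly compute that $w_2w_3c^{m-6}$ evaluates to $1$ on $\mathbb{R}P(\mu)$, but on the $\eta$-side you only \emph{propose} to expand, push down, and reduce via Wu relations, ending with ``should $\Phi$ fail to vanish universally \ldots\ one iterates through refinements.'' That is a program, not a proof. There is no a priori reason why $w_2(\mathbb{R}P(\eta))\,w_3(\mathbb{R}P(\eta))\,c^{m-6}[\mathbb{R}P(\eta)]$ should be zero for every odd-dimensional $(F^n,\eta)$ with $k$ odd: these classes involve $c$ nontrivially (e.g.\ $w_1=\theta_1+u_1+c$), so the product is \emph{not} pulled back from $F^n$, and the push-down produces a genuine Stiefel--Whitney number of $(F^n,\eta)$ whose vanishing would itself need an argument. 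The appeal to $\theta_n[F^n]=0$ and to Wu identities is not specific enough to constitute one.

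The missing idea is precisely what the Stong--Pergher classes $W[r]=W(\mathbb{R}P(\eta))/(1+c)^{k-r}$ are designed to supply: $W[0]_1=\theta_1+u_1$ lies in (the pullback of) $H^1(F^n)$ \emph{regardless of the parity of $k$}, so $W[0]_1^{\,n+3}=0$ cohomologically. The corresponding class over $F^4$ is $W[n-4]_1$, and since $n$ is odd one has $W[n-4]_1=\binom{n-4}{1}d+\alpha+\alpha=d$. Hence for any $m>n+3$ the single polynomial $W[0]_1^{\,n+3}c^{\,m-1-(n+3)}$ gives $0$ on $\mathbb{R}P(\eta)$ and $d^{m-1}\neq 0$ on $\mathbb{R}P(\mu)$. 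This eliminates your parity split entirely and closes the gap.
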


\begin{proof}
On $F^n$ one has 
$W[0]=(1+\theta_1+\theta_2+\cdots+\theta_n)\bigl\{1+\frac{u_1}{1+c}+\cdots+\frac{u_k}{(1+c)^k}\bigr\}$.
If $m\geq n+3$, one can form the class $W[0]_1^{n+3}c^{m-1-(n+3)}$ of dimension $m-1$. Since $W[0]_1^{n+3}=(\theta_1+u_1)^{n+3}$ comes from $F^n$,
this gives a zero characteristic number. The class over $F^4$ corresponding to $W[0]$ is $W[n-4]$. Now from 
\[
W(\mathbb{R}P(\mu))=(1+\alpha+\alpha^4)\bigl\{(1+d)^{m-4}+(1+d)^{m-5}\alpha+(1+d)^{m-6}\alpha^2+(1+d)^{m-7}\alpha^3  \bigr\} 
\]
we have 
\[
W[n-4]=(1+\alpha+\alpha^4)\bigl\{(1+d)^{n-4}+(1+d)^{n-5}\alpha+(1+d)^{n-6}\alpha^2+(1+d)^{n-7}\alpha^3  \bigr\}. 
\]
Since $n$ is odd, one has
\[
W[n-4]_1=\binom{n-4}{1}d+\alpha+\alpha=d.
\]
Thus, we have the nonzero characteristic number
\[
W[n-4]_1^{n+3}d^{m-1-(n+3)}[\mathbb{R}P(\mu)]=d^{m-1}[\mathbb{R}P(\mu)].
\]
\end{proof}
\noindent
Now we consider $n$ even, which implies in particular that $n\geqslant 6$.
Write $n-4=2^{p}q$, where $p,q\geqslant 1$. Over $F^n$, we take the same class $X$ considered before; that is $X\in
H^{\mathcal{M}(n-4)}(\mathbb{R}P(\eta), \mathbb{Z}_2)$ and each term of $X$
has a factor of dimension $n-3$ from the cohomology of $F^n$. Note that, on $F^n$, $W[0]_2=\theta_2+\theta_1u_1+u_1c+u_2$. Hence every term of 
$W[0]_2^4=\theta_2^4+\theta_{1}^{4}u_{1}^{4}+u_{1}^{4}c^{4}+u_{2}^4$ has a factor of dimension at least $4$ from $F^n$.
If $m>\mathcal{M}(n-4)+8$, we obtain the zero characteristic number
\[
X\cdot W[0]_2^4\cdot c^{m-1-(\mathcal{M}(n-4)+8)}[\mathbb{R}P(\eta)].
\]
So, the next and final task will be to show that, on $F^4$ the corresponding characteristic number
\[
Y\cdot W[n-4]_2^4\cdot d^{m-1-(\mathcal{M}(n-4)+8)}[\mathbb{R}P(\mu)]
\]
is nonzero.
Note that a general element of $H^t(\mathbb{R}P(\mu), \mathbb{Z}_2)$ is of the form $a_0 d^t+a_1\alpha d^{t-1}+a_2\alpha^2 d^{t-2}+a_3\alpha^3 d^{t-3}+a_4\alpha^4 d^{t-4}$. In particular, for the top-dimensional generator of $H^{m-1}(\mathbb{R}P(\mu), \mathbb{Z}_2)$, the number of 1's in
$\{a_0, a_1, a_2, a_3, a_4 \}$ is 1, 3 or 5.
From 
\[
W(\mathbb{R}P(\mu))=(1+\alpha+\alpha^4)\bigl\{(1+d)^{m-4}+(1+d)^{m-5}\alpha+(1+d)^{m-6}\alpha^2+(1+d)^{m-7}\alpha^3  \bigr\} 
\]
we obtain
\[
W[l]=(1+\alpha+\alpha^4)\bigl\{(1+d)^{l}+(1+d)^{l-1}\alpha+(1+d)^{l-2}\alpha^2+(1+d)^{l-3}\alpha^3  \bigr\} 
\]
and 
\begin{multline*}
W[l]_t=\binom{l}{t}d^t+\biggl\{\binom{l-1}{t-1}+\binom{l}{t-1} \biggr\}\alpha d^{t-1}+\biggl\{\binom{l-2}{t-2}+\binom{l-1}{t-2} \biggr\}\alpha^2 d^{t-2}+\hfill\\
+\biggl\{\binom{l-3}{t-3}+\binom{l-2}{t-3} \biggr\}\alpha^3 d^{t-3}+\biggl\{\binom{l-3}{t-4}+\binom{l}{t-4} \biggr\}\alpha^4 d^{t-4}.
\end{multline*}
To compute $Y$, we now write $r_i=2^p-2^i$, $i=0, 1,\ldots, p-1$, and set as before\\ $l_i=n+r_i-4=2^{p}q+2^p-2^i$.\\
Then
\begin{multline*}
W[l]_{2r_i}=\binom{2^{p}q+2^p-2^i}{2^{p+1}-2^{i+1}}d^{2r_i}+\biggl\{\binom{2^{p}q+2^p-2^i-1}{2^{p+1}-2^{i+1}-1}+\binom{2^{p}q+2^p-2^i}{2^{p+1}-2^{i+1}-1}\biggr\}\alpha d^{2r_i-1}\\
+\biggl\{\binom{2^{p}q+2^p-2^i-2}{2^{p+1}-2^{i+1}-2}+\binom{2^{p}q+2^p-2^i-1}{2^{p+1}-2^{i+1}-2}\biggr\}\alpha^2 d^{2r_i-2}\\ +\biggl\{\binom{2^{p}q+2^p-2^i-3}{2^{p+1}-2^{i+1}-3}+\binom{2^{p}q+2^p-2^i-2}{2^{p+1}-2^{i+1}-3}\biggr\}\alpha^3 d^{2r_i-3}\\
+\biggl\{\binom{2^{p}q+2^p-2^i-3}{2^{p+1}-2^{i+1}-4}+\binom{2^{p}q+2^p-2^i}{2^{p+1}-2^{i+1}-4}\biggr\}\alpha^4 d^{2r_i-4}.\\
\end{multline*}
By Lucas' theorem we have the following values for the above binomial coefficients:

\begin{enumerate}[(i)]

\item $\binom{2^{p}q+2^p-2^i}{2^{p+1}-2^{i+1}}\equiv\text{1 mod 2}$ \\
\item $\binom{2^{p}q+2^p-2^i-1}{2^{p+1}-2^{i+1}-1}\equiv\text{0 mod 2}$\\
\item $\binom{2^{p}q+2^p-2^i}{2^{p+1}-2^{i+1}-1}\equiv
\begin{cases}
	\text{1 mod 2},& \text{if $i=0$},\\
	\text{0 mod 2},& \text{if $i\geqslant 1$},
\end{cases}	
$\\
\item $\binom{2^{p}q+2^p-2^i-2}{2^{p+1}-2^{i+1}-2}\equiv
\begin{cases}
	\text{1 mod 2},& \text{if $i=0$},\\
	\text{0 mod 2},& \text{if $i\geqslant 1$},
\end{cases}	
$\\
\item $\binom{2^{p}q+2^p-2^i-1}{2^{p+1}-2^{i+1}-2}\equiv
\begin{cases}
	\text{1 mod 2},& \text{if $i=0$},\\
	\text{0 mod 2},& \text{if $i\geqslant 1$},
\end{cases}	
$\\
\item $\binom{2^{p}q+2^p-2^i-3}{2^{p+1}-2^{i+1}-3}\equiv
\begin{cases}
	\text{1 mod 2},& \text{if $i=1$},\\
	\text{0 mod 2},& \text{if $i=0$ or $i\geqslant 2$},
\end{cases}	
$\\
\item $\binom{2^{p}q+2^p-2^i-2}{2^{p+1}-2^{i+1}-3}\equiv\text{0 mod 2}$
\item $\binom{2^{p}q+2^p-2^i-3}{2^{p+1}-2^{i+1}-4}\equiv
\begin{cases}
	\text{1 mod 2},& \text{if $i=1$},\\
	\text{0 mod 2},& \text{if $i=0$ or $i\geqslant 2$},
\end{cases}	
$\\
\item $\binom{2^{p}q+2^p-2^i}{2^{p+1}-2^{i+1}-4}\equiv
\begin{cases}
	\text{1 mod 2},& \text{if $i=0, 1, 2$},\\
	\text{0 mod 2},& \text{if $i\geqslant 3$},
\end{cases}	
$\\
\end{enumerate}
From above, it follows that
\[
W[l]_{2r_i}=
\begin{cases}
d^{2r_i}+\alpha d^{2r_i-1}+\alpha^4 d^{2r_i-4},&  \text{if $i=0$},\\
d^{2r_i}+\alpha^3 d^{2r_i-3},& \text{if $i=1$},\\
d^{2r_i}+\alpha^4 d^{2r_i-4},& \text{if $i=2$},\\
d^{2r_i},& \text{if $i\geqslant 3$}.
\end{cases}
\]

For $r=2^p-1$, $l=n+r-4=2^pq+2^p-1$ and 
\begin{multline*}
W[l]_{2r+1}=\binom{2^{p}q+2^p-1}{2^{p+1}-1}d^{2r+1}+\biggl\{\binom{2^{p}q+2^p-2}{2^{p+1}-2}+\binom{2^{p}q+2^p-1}{2^{p+1}-2}\biggr\}\alpha d^{2r}\hfill\\
+\biggl\{\binom{2^{p}q+2^p-3}{2^{p+1}-3}+\binom{2^{p}q+2^p-1}{2^{p+1}-3}\biggr\}\alpha^2d^{2r-1}+\biggl\{\binom{2^{p}q+2^p-4}{2^{p+1}-4}+\binom{2^{p}q+2^p-3}{2^{p+1}-4}\biggr\}\alpha^3 d^{2r-2}\hfill\\
+\biggl\{\binom{2^{p}q+2^p-4}{2^{p+1}-5}+\binom{2^{p}q+2^p-1}{2^{p+1}-5}\biggr\}\alpha^4d^{2r-3}.\hfill
\end{multline*}
In the above expression, the unique binomial coefficient which is zero is
$\binom{2^{p}q+2^p-4}{2^{p+1}-5}$. Hence
$W[l]_{2r+1}=d^{2r+1}+\alpha^4d^{2r-3}$.
With these $l_i$'s and $l$, and for $p\leqslant q+1$ we obtain that
\begin{align*} 
Y=& (W[l]_{2r+1})^{q+1-p}\cdot\prod_{i=0}^{p-1} W[l]_{2r_i}\\
=& (d^{2r+1}+\alpha^4 d^{2r-3})^{q+1-p}\cdot (d^{2r_0}+\alpha d^{2r_0-1}+\alpha^4 d^{2r_0-4})\cdot(d^{2r_1}+\alpha^3 d^{2r_1-3})\cdot(d^{2r_2}+\alpha^4 d^{2r_2-4})\\
& \cdot d^{2(r_3+\cdots+r_{p-1})}.
\end{align*}
But 
\begin{align*}
(d^t+\alpha^4 d^{t-3})^s=& \sum_{i=0}^{s}\binom{s}{i}(d^t)^{s-i}(\alpha^4 d^{t-3})^i\\
=& d^{ts}+\binom{s}{1}d^{t(s-1)}\alpha^4 d^{t-3}\\
=& d^{ts}+s\alpha^4 d^{ts-3}.
\end{align*}
and $q+1-p\equiv p\text{ mod 2}$.\\
Therefore, we obtain
\[
Y=
\begin{cases}
	d^{\mathcal{M}(n-4)}+\alpha d^{\mathcal{M}(n-4)-1}+\alpha^3 d^{\mathcal{M}(n-4)-3}+\alpha^4 d^{\mathcal{M}(n-4)-4},& \text{if $p$ is even},\\
	d^{\mathcal{M}(n-4)}+\alpha d^{\mathcal{M}(n-4)-1}+\alpha^3 d^{\mathcal{M}(n-4)-3}, & \text{if $p$ is odd}.
\end{cases}
\]

For $p>q+1$, we have
\[
Y=\prod_{i=p-(q+1)}^{p-1} W[l]_{2r_i}=
\begin{cases}
	d^{\mathcal{M}(n-4)}+\alpha^3 d^{\mathcal{M}(n-4)-3}+\alpha^4 d^{\mathcal{M}(n-4)},& \text{if $p-(q+1)=1$},\\
	d^{\mathcal{M}(n-4)}+\alpha^4 d^{\mathcal{M}(n-4)},& \text{if $p-(q+1)=2$},\\
    d^{\mathcal{M}(n-4)},& \text{if $p-(q+1)>2$}.
\end{cases}
\]
Our final step is the calculation of $W[n-4]_2^4$ on $F^4$. We have
\[
W[n-4]=(1+\alpha+\alpha^4)\bigl\{(1+d)^{n-4}+(1+d)^{n-5}\alpha+(1+d)^{n-6}\alpha^2+(1+d)^{n-7}\alpha^3  \bigr\}. 
\]
and
\begin{align*} 
W[n-4]_{2}^{4}= &
\biggl(\binom{n-4}{2}d^2+\biggl(\binom{n-4}{1}+\binom{n-5}{1} \biggr)\alpha d \biggr)^4\\
=& \binom{2^pq}{2}d^8+\alpha^4d^4=
\begin{cases}
d^8+\alpha^4d^4,& \text{if $p=1$},\\
\alpha^4d^4,& \text{if $p>1$}.	
\end{cases}
\end{align*}
Since $Y$ has the form $d^t$, $d^t+\alpha d^{t-1}+\alpha^3 d^{t-3}$,
$d^t+\alpha^3 d^{t-3}+\alpha^4 d^{t-4}$, $d^t+\alpha^4 d^{t-4}$,
$d^t+\alpha d^{t-1}+\alpha^3 d^{t-3}+\alpha^4 d^{t-4}$,
for $p>1$, we obtain $Y\cdot W[n-4]_{2}^{4}=\alpha^4 d^{\mathcal{M}(n-4)+4}$.\\
If $p=1$, we have $d^{\mathcal{M}(n-4)}+\alpha d^{\mathcal{M}(n-4)-1}+\alpha^3 d^{\mathcal{M}(n-4)-3}$ and
\begin{align*}
Y\cdot W[n-4]_{2}^{4}=&
(d^{\mathcal{M}(n-4)}+\alpha d^{\mathcal{M}(n-4)-1}+\alpha^3      d^{\mathcal{M}(n-4)-3})\cdot (d^8+\alpha^4d^4)\\
=& d^{\mathcal{M}(n-4)+8}+\alpha d^{\mathcal{M}(n-4)+7}+\alpha^3 d^{\mathcal{M}(n-4)+5}+\alpha^4 d^{\mathcal{M}(n-4)+4}.
\end{align*}
In any case, $Y\cdot W[n-4]_2^4\cdot d^{m-1-(\mathcal{M}(n-4)+8)}[\mathbb{R}P(\mu)]$
is a nonzero characteristic number on $F^n$, which ends the proof.


\begin{thebibliography}{99}

\bibitem{bh} A. Borel, F. Hirzebruch, \emph{On characteristic classes and homogeneous spaces, I}, Amer. J. Math. {\bf 80} (1958) 458--538.

\bibitem{Borel1953}
A. Borel, \emph{Sur la cohomologie des espaces fibr\'es principaux et des espaces homog\`enes de groupes de {L}ie compacts}, Ann. of Math. (2) \textbf{57} (1953), 115--207.	
	
\bibitem{bp} E. M. Barbaresco, P. L. Q. Pergher, \emph{Involutions fixing $F^n\cup F^3$}, Indag. Math. (N.S.) {\bf 29} (2018), no.~2, 807--818.

\bibitem{boardman} J. M. Boardman,  \emph{On manifolds with involution}, Bull. Amer. Math. Soc. {\bf 73} (1967), 136--138.

\bibitem{cf} P. E. Conner, E. E. Floyd, \emph{Differentiable periodic maps}, Ergebnisse der Mathematik und ihrer Grenzgebiete {\bf 33} Springer-Verlag, Berlin, 1964.

\bibitem{fp-topology} F. G. Figueira, P. L. Q. Pergher, \emph{Involutions fixing $F^n\cup F^2$}, Topology Appl. {\bf 153} (2006), no.~14, 2499--2507.

\bibitem{fp-dedicata} F. G. Figueira, P. L. Q. Pergher, \emph{Two commuting involutions fixing $F^n\cup F^{n-1}$}, Geom. Dedicata {\bf 117} (2006), 181--193.

\bibitem{kelton2004} S.M. Kelton, \emph{Involutions fixing $\mathbb{R}\rm{P}^{j}\cup F^n$}, Topology Appl. {\bf 142} (2004), no.~1-3, 197--203.

\bibitem{kelton2005} S.M. Kelton, \emph{Involutions fixing $\mathbb{R}\rm{P}^{j}\cup F^n$, II}, Topology Appl. {\bf 149} (2005), no.~1-3, 217--226.
 
\bibitem{ks} C. Kosniowski, R. E. Stong, \emph{Involutions and characteristic numbers}, Topology {\bf 17} (1978), 309--330.

\bibitem{ms} J. W. Milnor, J. D. Stasheff, \emph{Characteristic classes},
Annals of Mathematics Studies, {\bf 76}, Princeton University Press, Princeton, N. J. 1974.

\bibitem{pergher} P. L. Q. Pergher, \emph{Involutions fixing $F^n\cup\{indecomposable\}$}, Canad. Math. Bull. {\bf 55} (2012), no.~1, 164--171.

\bibitem{ps} P. L. Q. Pergher, R. E. Stong, \emph{Involutions fixing ${\{point\}}\cup F^n$}, Transform. Groups {\bf 6} (2001), no.~1, 78--85.

\bibitem{royster} D. C. Royster, \emph{Involutions fixing the disjoint union of two projective spaces}, Indiana Univ. Math. J. {\bf 29} (1980), 267--276. 

\end{thebibliography}
\end{document}